\renewcommand{\underline}[1]{\ul{#1}}
\theoremstyle{plain}
\newtheorem{theorem}{Theorem}[section]
\newtheorem{lemma}[theorem]{Lemma}
\theoremstyle{definition}
\newtheorem{definition}[theorem]{Definition}
\newtheorem{example}[theorem]{Example}
\newcommand		{\abs}[1]{\left|#1\right|}
\newcommand    	{\N} {\mathbb N}
\newcommand    	{\Z} {\mathbb Z}
\newcommand    	{\sym} {\mathrm {Sym}}
\renewcommand   {\O} {\mathcal O}
\newcolumntype{L}[1]{>{\raggedright\let\newline\\\arraybackslash\hspace{0pt}}m{#1}}
\newcolumntype{C}[1]{>{\centering\let\newline\\\arraybackslash\hspace{0pt}}m{#1}}
\newcolumntype{R}[1]{>{\raggedleft\let\newline\\\arraybackslash\hspace{0pt}}m{#1}}
\title[The smartest person in the room is in the wrong room]
{If you are the smartest person in the room, \\ you are in the wrong room}
\author{Davide Sclosa\orcidlink{0000-0003-0806-2591}}
\address{University of Crete,
Department of Mathematics and Applied Mathematics, Heraklion, Greece}
\address{Vrije Universiteit Amsterdam,
Department of Mathematics, Amsterdam, \mbox{The Netherlands}}
\date{\today}
\begin{document}
\maketitle
\begin{abstract}
If taken seriously, the advice in the title leads to interesting combinatorics.
Consider $N$~people moving between $M$~rooms as follows:
at each step, simultaneously,
the smartest person in each room moves to a different room of their choice,
while no one else moves. The process repeats.
In this paper we determine which configurations are reachable, from which other configurations,
and provide bounds on the number of moves.
Namely,
let~$G(N,M)$ be the directed graph with vertices representing all~$M^N$ configurations and
edges representing possible moves.
We prove that the graph~$G(N,M)$ is weakly connected, and that it is strongly connected
if and only if~$M\geq N+1$ (one extra room for maneuvering is both required and sufficient).
For~$M\leq N$, we show that the graph has a giant strongly connected component
with~$\Theta(M^N)$ vertices and diameter~$\O(N^2)$.
\end{abstract}

\section{Introduction}
Fix~$N\geq 1$ and~$M\geq 2$. Denote~$[K]=\{1,\ldots,K\}$.
Let~$V=\{f:[N]\to[M]\}$ and let~$E$ be the set of pairs~$(f,g)\in V\times V$
with the following property:
for all~$k\in [N]$ we have~$g(k)\neq f(k)$ if and only if~$k = \max f^{-1}(f(k))$.
Define~$G(N,M) = (V,E)$.

Interpreted as in the abstract,
the total order~$1<\cdots<N$ represents smartness.
Each vertex is a configurations of~$N$ people in~$M$ rooms.
The property ``$g(k)\neq f(k)$ if and only if~$k = \max f^{-1}(f(k))$'' states that,
from the configuration~$f$ to~$g$, exactly those people who are the smartest in their
room under~$f$ change rooms.

We described the rules of the game.
The goal of the game is to go from one given configuration to another given configuration.
This is not always possible:
in Section~\ref{sec:proof_1} we prove the following theorem,
showing that having one extra room for maneuvering
is both necessary and sufficient.

\begin{theorem} \label{thm:1}
The graph~$G(N,M)$ is strongly connected if and only if~$M\geq N+1$.
\end{theorem}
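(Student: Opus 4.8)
The plan is to treat the two implications separately: an impossibility (in-degree) argument for necessity, and a constructive routing argument for sufficiency, organised around the \emph{spread} configurations---those injective $f$ in which every room is a singleton and hence \emph{everybody} is the smartest person in their room and must move.

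For necessity I would show that when $M\le N$ the constant configuration $\mathbf r=(r,\dots,r)$ has in-degree zero, so it cannot be reached from the distinct constant configuration $\mathbf r'$ and $G(N,M)$ fails to be strongly connected. Suppose $(f,\mathbf r)\in E$. If some person satisfied $f(k)=r$, then $\mathbf r(k)=r=f(k)$ would be unchanged, forcing $k$ not to be the smartest of its room; but room $r$ would then be a nonempty room without a smartest person, a contradiction. Hence room $r$ is empty under $f$, so every person changes room, so every person is the smartest of their room, i.e.\ $f$ is injective into $[M]\setminus\{r\}$. This needs $N\le M-1$, which is impossible when $M\le N$.

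For sufficiency, assume $M\ge N+1$ and use the following three steps; together they give, for arbitrary $f,g$, a walk $f\rightsquigarrow s_1\rightsquigarrow s_2\to g$ through spreads. (i) \textbf{Reaching a spread.} From any configuration one can reach a spread, by repeatedly sending the smartest of each occupied room so as to increase the number of occupied rooms; the point is that a non-spread configuration uses at most $N-1$ rooms and hence leaves at least two rooms empty, which (via a Hall-type matching of movers to empty rooms) is enough room to maneuver. (ii) \textbf{Connecting spreads.} Any two spreads are mutually reachable. Restricting to the spreads supported on a fixed $N$-set of rooms, a single move from $s$ lands on $\sigma\circ s$, where $\sigma$ ranges over the \emph{derangements} of that set, so reachability here is governed by the subgroup of $\sym$ generated by derangements. (iii) \textbf{Leaving for an arbitrary target.} From a spread $s$ with $s(k)\neq g(k)$ for all $k$ one reaches $g$ in a single move, since in a spread everybody moves and may go to any room other than their own; such an $s$ exists by a greedy choice, because each person has $M-1\ge N$ admissible rooms.

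The main obstacle I expect is step (ii), and in particular the group theory it rests on: derangements of an $n$-set generate the full symmetric group when $n\neq 3$, but only the alternating group $A_3$ when $n=3$, so the odd rearrangements of three rooms are unreachable without help. The role of the hypothesis $M\ge N+1$ is precisely to supply one spare room, which both removes this parity obstruction (a detour through the extra room realises an odd rearrangement) and lets us pass between different room-supports; making this maneuvering uniform, and verifying the matching counts in step (i) when the occupied rooms are numerous, is where the real bookkeeping lies.
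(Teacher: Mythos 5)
Your proposal is correct and follows essentially the same route as the paper: necessity via the in-degree-zero argument for constant configurations, and sufficiency via the hub of injective ("spread") configurations, using that derangements generate $\sym(n)$ for $n\neq 3$ and that the spare room supplied by $M\geq N+1$ resolves both the $n=3$ parity obstruction and the change of room-support. The only cosmetic differences are that the paper reaches the target $g$ from a spread by deranging the set of room-maxima rather than by your greedy choice, and it handles the $M=3$ detour by an explicit short path.
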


The graph~$G(N,M)$ is always weakly connected.
The next theorem, proved in Section~\ref{sec:proof_2},
describes the strongly components in the case~$M\leq N$: the graph contains
a single giant component, and
several one-vertex components.
Every path enters the giant component in at most one step
and never leaves afterwords.

\begin{theorem} \label{thm:2}
Suppose that~$3\leq M \leq N$. Then~$G(N,M)$ is weakly connected, has~$1$ strongly
connected component containing~$M^N - M^{N-M+1}$ vertices,
and~$M^{N-M+1}$ strongly connected components
each containing a single vertex of in-degree zero.
\end{theorem}

\begin{figure}
\includegraphics[scale=0.5]{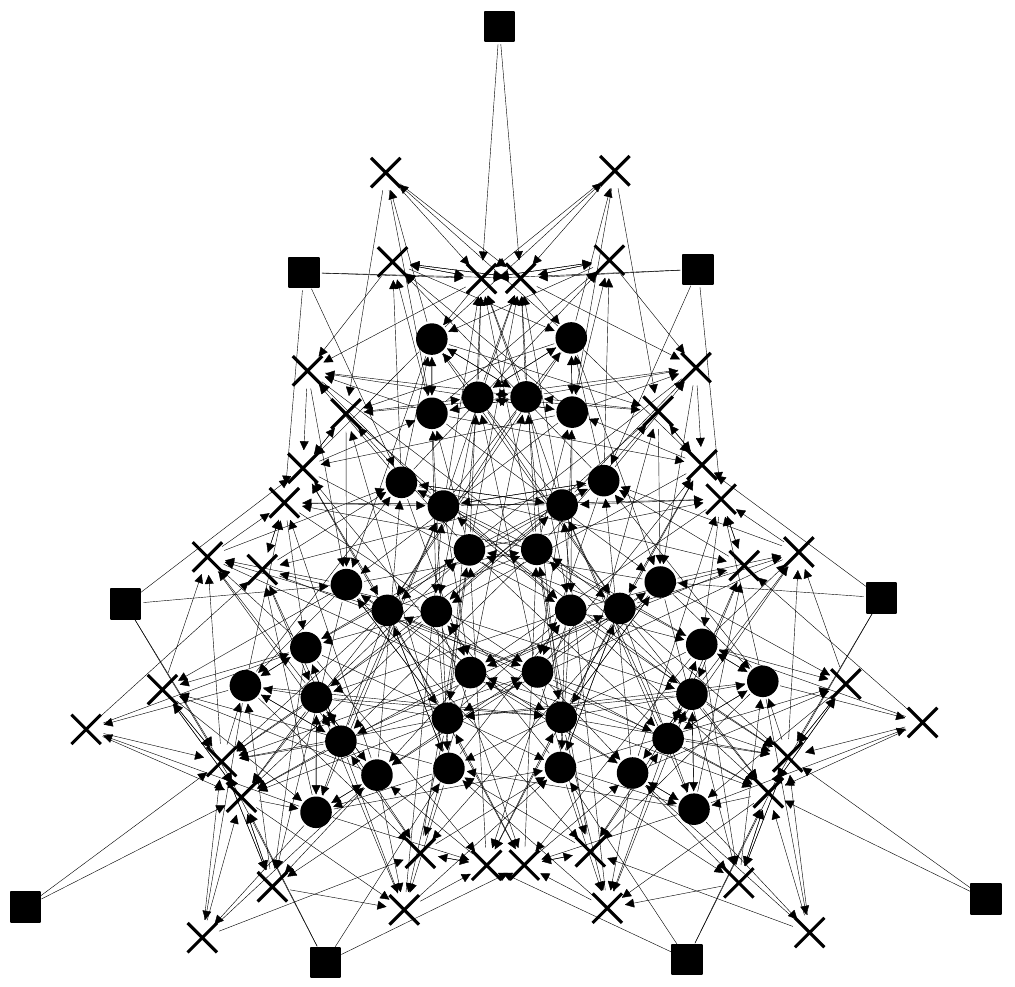}
\caption{The configuration graph~$G(N,M)$ for~$N=4$ people and~$M=3$ rooms.
The set~$V_s$ is labeled~$\bullet$,
the set~$V_c$ is labeled~{\tiny $\blacksquare$},
the other vertices are labeled~$\times$.
The vertices labeled~$\bullet$ and~$\times$ form a strongly connected component.
Vertices labeled~{\tiny $\blacksquare$} have no way to be reached, and form strong
components on their own.
}
\label{fig:1}
\end{figure}

As we will see, the ``unreachable'' configurations
of in-degree zero are exactly those
in which the $M$~smartest people share the same room.

The case~$2=M<N$, excluded from Theorem~\ref{thm:2},
is exceptional but easy to describe:
the two smartest people occupy both rooms in at most one step,
and swap places with no-one else moving afterwards.
In this case~$G(N,M)$ is the union of several small weakly connected components.


\subsection{Asynchronous Game}
Consider the variant of our game in which, at each step, a room is selected arbitrarily
and the smartest person of that room only moves to another room, while no one else moves.
This asynchronous game, which has the same priority rule as the original one, but in which
only one person moves at a time, will be analyzed in Section~\ref{sec:proof_2},
as a tool to prove Theorem~\ref{thm:2}.
We will see that
the configuration graph of the asynchronous game is always strongly connected:
the unreachable configurations
(the vertices with in-degree zero appearing in Theorem~\ref{thm:2})
disappear if we allow a single person to move at a time.
Unless stated explicitly, throughout the paper we will always refer to
the original synchronous version of the game.


\subsection{Relation to Towers of Hanoi}
The game discussed in this paper resembles the classical Towers of Hanoi
problem~\cite{romik2006shortest, cull1985towers, vsunic2012twin}.
There is a natural analogy between the two, with people corresponding to disks and
rooms corresponding to towers.
The key difference lies in the priority rule. The Towers of Hanoi
have a last-in first-out rule (only the most recently placed disk may
be moved from a tower). By contrast, our game ignores temporal order, and enforces
a fixed global ranking among the moving elements.
This difference has major consequences for worst-case complexity.
Towers of Hanoi requires an exponential number of moves, while
both the synchronous and asynchronous variants of our game only require
a polynomial number of moves.


\subsection{Complexity}
Fix the number of rooms~$M\geq 3$ and let the number of people~$N$ vary.
We will use standard asymptotic notation.\footnote{
Given two functions~$a,b:\N\to [0,\infty)$,
we denote~$a(N)=\O(b(N))$ if eventually~$a(N)\leq c\cdot b(N)$ for some
constant~$c>0$, we denote~$a(N)=\Omega(b(N))$ if eventually~$a(N)\geq c\cdot b(N)$
for some constant~$c>0$,
and~$a(N)=\Theta(b(N))$ if both~$a(N)=\O(b(N))$ and~$a(N)=\Omega(b(N))$ hold.}
The proof of Theorem~\ref{thm:1} shows that if~$N<M$, any configuration can be reached
from any other in~$\O(N)$ steps. For~$N\geq M$, the set~$V_c$ of configurations with
in-degree zero arises (Definition~\ref{def:Vc}).
The set~$V\setminus V_c$ is the giant component in Theorem~\ref{thm:2}.
The proof of Theorem~\ref{thm:2} shows that any configuration~$g\in V\setminus V_c$
can be reached from any other configuration~$f\in V$
in at most~$\O(N^2)$ steps.
Therefore, the diameter of the giant component grows at least linearly and
at most quadratically in~$N$. We are unable to determine the exact asymptotic growth,
see Footnote~\ref{foot} for more details.


\subsection{Random Walk Interpretation}
Suppose that the smartest person in each room independently chooses their next room
uniformly at random from among the other rooms.
We can then interpret the game as a random walk~\cite{lovasz1995mixing}
on the graph~$G(N,M)$, in which the next vertex is chosen uniformly at random
among the out-neighbors.
Interpreted this way,
Theorem~\ref{thm:1} and Theorem~\ref{thm:2} describe recurrence, and imply uniqueness
of the invariant measure.
Alternatively, we can interpret the game
as a multiple random walk, with $N$~random walkers moving among $M$~rooms
under a priority rule~\cite{cox1989coalescing}. The type of priority rule
we consider appears to be novel.



\subsection{Dynamical Systems Interpretation}
Let~$N,M\geq 3$.
Consider the operator that maps each set of vertices to its out-neighborhood.
Namely, let~$X=\{A \subseteq V: \ A\neq \emptyset\}$
and define~$T(A)=\{w\in V: \ (v,w)\in E,\ \text{for some } v\in A\}$.
We can then interpret our results dynamically.
The giant component~$V\setminus V_c$ is the only fixed point of the dynamical
system~$(X,T)$, and for all~$A\in X$ we have~$\lim_{n\to\infty} T^n(A)=V\setminus V_c$.


\section{Proof of Theorem~\ref{thm:1}} \label{sec:proof_1}
The proof of Theorem~\ref{thm:1} is broken down in a series of lemmas that will be
needed again in the proof of Theorem~\ref{thm:2}.
These lemmas are either impossibility arguments, or explicit algorithms.
Such algorithms are based on
the algebraic properties of \emph{derangements}, i.e., permutations with no fixed points.

As in the introduction, we assume~$N\geq 1$ and~$M\geq 2$
to avoid trivial cases
(for~$M=1$ the game is ill-defined since the smartest person
has nowhere to go).

Inspired by~\cite{tao2006additive}, we use brackets to denote integer intervals. For~$K,H\in \Z$ let
\[
	[K,H] = \{K,K+1,\ldots,H\} \cap \Z_{\geq 1}
\]
and~$(K,H] = [K,H]\setminus \{K\}$.
If~$1\leq K \leq H$ then~$[K,H] = \{K,K+1,\ldots,H\}$,
if~$K\leq 1 \leq H$ then~$[K,H] = [H]$.

We will describe a subset of vertices~$V_s\subseteq V$ that is particularly
well connected, and a subset of vertices~$V_c\subseteq V$ that is particularly poorly
connected.

\begin{definition}
Let~$V_s \subseteq V$ be the set of configurations~$f$
such that the restriction of~$f$ to~$(N-M,N]$ is injective.
\end{definition}

Intuitively, $V_s$~is the set of configurations in which the $M$~smartest people
among~$1,\ldots,N$ occupy different rooms;
if~$N \leq M$, this means that everyone is in a different room.
As we will see, the set~$V_s$ serves as a central hub in the graph~$G(N,M)$,
see Figure~\ref{fig:1}.

\begin{lemma} \label{lem:spread}
The set~$V_s$ can be reached from any configuration and in at most~$N$ steps.
\end{lemma}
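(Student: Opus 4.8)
The plan is to produce, from any configuration in which the $M$ smartest people do not already occupy distinct rooms, a single legal move that strictly increases the number of rooms occupied by those people, and then to iterate. Throughout I write $T=(N-M,N]$ for the set of the $\min(M,N)$ smartest people, I call a room \emph{top-occupied} if it contains a member of $T$, and for a configuration $f$ I let $a(f)$ denote the number of distinct rooms in $f(T)$. Then $1\le a(f)\le \abs{T}$, with equality on the right exactly when $f\in V_s$, and since person~$N\in T$ always occupies some room the quantity $\Phi(f)=\abs{T}-a(f)$ satisfies $0\le \Phi(f)\le \abs{T}-1\le N-1$. The lemma will follow once I show that whenever $\Phi(f)>0$ there is an edge $(f,g)\in E$ with $a(g)\ge a(f)+1$: iterating this reaches $V_s$ in at most $N-1\le N$ steps.

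For the key step I would classify the nonempty rooms of $f$ by their intersection with $T$: say $\alpha$ rooms contain at least two members of $T$, $\beta$ rooms contain exactly one, and the remaining occupied rooms are disjoint from $T$. Then $a(f)=\alpha+\beta$, and $\Phi(f)>0$ forces both $\alpha\ge 1$ (some room is crowded) and the existence of a room disjoint from $T$ (because $a(f)<\abs{T}\le M$, so fewer than $M$ rooms are top-occupied). The forced movers are the smartest person of each nonempty room; since members of $T$ carry the largest labels, the smartest person of each top-occupied room belongs to $T$, so exactly $a(f)$ top movers leave — one from each top-occupied room — while every crowded room retains a member of $T$ who stays put and keeps it top-occupied.

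The heart of the argument is then a routing of these movers via a system of distinct representatives. The $\alpha$ crowded rooms remain top-occupied automatically, so it suffices to deposit top movers into $\beta+1$ further distinct rooms: the $\beta$ singly-occupied rooms (to keep them occupied after their sole $T$-member departs) and one room currently disjoint from $T$. There are $\alpha+\beta\ge \beta+1$ top movers available and at least $\beta+1$ eligible destination rooms, and crucially the $\alpha\ge 1$ movers coming from crowded rooms are unconstrained, since their home rooms stay occupied regardless. An explicit cyclic assignment then works: send the singly-occupied movers one step forward along the list of singly-occupied rooms, divert the last of them into the new room, and backfill the first singly-occupied room with a crowded-room mover. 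Every mover lands in a room different from its own, so the move is legal, and $a(g)\ge \alpha+\beta+1=a(f)+1$. The surplus movers — extra smartest people from crowded rooms and the smartest people of rooms disjoint from $T$ — are irrelevant to the count and can be sent to any legal destination, which exists because $M\ge 2$.

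The one place I would be most careful, and which I expect to be the only real obstacle, is checking that this routing never forces a mover back into its own room in the degenerate cases $\beta\in\{0,1\}$ or $\alpha=1$. Each is a short verification of the explicit assignment: for $\beta=0$ a single crowded-room mover fills the new room; for $\beta=1$ the lone singly-occupied mover goes to the new room while a crowded-room mover backfills its vacated room. Once the strict inequality $a(g)\ge a(f)+1$ is secured, the rest is bookkeeping on $\Phi$, giving the bound of at most $N$ steps.
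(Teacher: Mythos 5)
Your proof is correct and follows the same overall strategy as the paper's: exhibit one legal simultaneous move that strictly increases the number of distinct rooms occupied by the $\min(M,N)$ smartest people, then iterate this monovariant argument to reach $V_s$ in fewer than $N$ steps. The only real difference is the explicit routing of the forced movers. The paper first reduces to $N\le M$ by ignoring $[1,N-M]$, then sends the smartest person of each crowded room to a \emph{distinct empty} room (of which there are enough, since $2\alpha+\beta\le N\le M$) and deranges the set of people who occupy rooms individually; you instead keep track only of top-occupied rooms, cycle the movers from singly-top-occupied rooms forward, divert the last one into a single fresh non-top-occupied room, and backfill the first vacated room with a crowded-room mover. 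Both routings give the same bound, but yours needs only one spare room rather than one per crowded room, and it handles cleanly the degenerate case of exactly one singly-occupied room, where the paper's instruction to ``derange'' a one-element set has no literal meaning and a crowded-room mover must step in exactly as in your backfill. Your explicit treatment of the cases $\beta\in\{0,1\}$ is therefore not just caution; it quietly repairs the one soft spot in the published argument.
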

\begin{proof}
Our goal is to place the~$M$ smartest people~$(N-M,N]$
into the $M$~different rooms.
To this end, movements of the people in~$[1,N-M]$
can be chosen arbitrarily, since they do not constrain the movements of~$(N-M,N]$.
Therefore, we may and will assume~$N\leq M$.

Fix any configuration~$f\in V$.
Let~$g$ be obtained from~$f$ as follows:
for each room with at least $2$~people move the smartest person to a distinct empty room
(which exists since~$N\leq M$),
and derange the set of people who occupy a room individually.
Then~$(f,g)$ is an edge.
Unless~$f\in V_s$, the configuration~$g$ occupies strictly more rooms.
By proceeding this way, we reach~$V_s$ in at most~$N$ steps.
\end{proof}

The following lemma shows that, with the exception of~$n=3$,
the symmetric group~$\sym(n)$ is generated by derangements.
The case~$n=3$ is indeed an exception since derangements of~$3$ objects are even permutations.

\begin{lemma} \label{lem:derangements}
For every integer~$n\neq 3$ the symmetric group~$\sym(n)$ is generated by derangements.
More precisely,
every permutation of~$n\neq 3$ objects is a product of at most~$4$ derangements.
\end{lemma}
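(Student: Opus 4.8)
The plan is to prove the statement by explicitly analyzing how many derangements are needed to express an arbitrary permutation, treating the parity obstruction carefully. The key structural fact is that the product of two derangements can be any permutation of a given parity, once $n$ is large enough, so the ``at most $4$'' bound should come from writing an arbitrary $\sigma\in\sym(n)$ as a product of two elements each of which is itself a product of two derangements.

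First I would handle the cheap direction: a single derangement $d$ is a specific element, and for any target $\sigma$ the element $\sigma d^{-1}$ is also a permutation, so if I can show every permutation is a product of two derangements I would already be close. I would therefore aim to establish the sharper claim that, for $n\neq 3$ (and excluding the trivial small cases $n=1,2$ which I would check by hand), \emph{every} permutation is a product of at most two derangements when it has the right parity, and otherwise at most four. Concretely, I would first show that the identity is a product of two derangements: take any derangement $d$ and observe $d\cdot d^{-1}=\mathrm{id}$, where $d^{-1}$ is again a derangement. This seeds the argument and shows the generated subgroup is nontrivial.

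The main construction is to realize an arbitrary $\sigma$ as a product of derangements by ``correcting fixed points.'' Given $\sigma$, I would pick a derangement $d$ chosen so that $d^{-1}\sigma$ is also a derangement; the existence of such a $d$ is the crux. One clean way is to use a full cycle: the $n$-cycle $c=(1\,2\,\cdots\,n)$ is a derangement for $n\geq 2$, and I would try to choose a power or conjugate of $c$ that moves every point off both its $\sigma$-image and itself simultaneously. For $n\geq 4$ there is enough room to do this by a counting or Hall's-theorem-style matching argument: I need a permutation $d$ with $d(k)\neq k$ and $d(k)\neq \sigma(k)$ for all $k$, i.e. a derangement avoiding a second forbidden value at each point, which exists because each point forbids at most two of the $n\geq 4$ available targets. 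Once $d$ exists, $\sigma=(\sigma d^{-1})\cdot d$ expresses $\sigma$ as a product of two derangements. The parity issue only forces the count up: since each derangement has a fixed parity pattern depending on $n$, for arbitrary target parity I may need to compose two such products, giving the stated bound of four. I expect the hardest step to be the matching/counting argument guaranteeing a derangement avoiding both $k$ and $\sigma(k)$ at every point, and in particular verifying it exactly at the boundary $n=4$ and correctly isolating why $n=3$ genuinely fails (there only the three transpositions are the non-derangements, and the two derangements are the even $3$-cycles, so the subgroup they generate is $A_3$, missing all odd permutations).

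Throughout I would keep the small cases explicit: $n=1$ and $n=2$ are immediate (for $n=2$ the unique derangement is the transposition, which together with the identity $=$ product of two transpositions generates all of $\sym(2)$), and I would state the $n=3$ exception as the failure of the matching argument combined with the parity coincidence. With those in place, the bound ``at most $4$ derangements'' follows uniformly for all $n\neq 3$.
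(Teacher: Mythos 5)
Your proposal is correct in outline but takes a genuinely different route from the paper. The paper splits by parity: it cites the Gleason--Husemoller result that every even permutation of $n\geq 4$ objects is a product of two $n$-cycles (hence of two derangements), and then handles odd permutations by an explicit two-derangement factorization of a transposition, $\tau\circ\sigma$ and $\sigma^{-1}$ with $\sigma$ an $n$-cycle, giving the bound $4$. You instead aim directly at the stronger claim that \emph{every} $\sigma\in\sym(n)$, $n\geq 4$, is a product of two derangements, by finding a permutation $d$ with $d(k)\notin\{k,\sigma(k)\}$ for all $k$ and writing $\sigma=(\sigma d^{-1})\cdot d$. That plan is sound: the bipartite graph of allowed position--value pairs has all degrees at least $n-2$ on both sides, and Hall's condition is easily verified for $n\geq 4$ (for $\left|S\right|\leq n-2$ use the left degrees; for $\left|S\right|\geq n-1$ note a value with no neighbor in $S$ would need its $\geq n-2\geq 2$ neighbors inside a set of size $\leq 1$). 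You correctly flag this verification as the crux and correctly isolate why $n=3$ fails. Your approach buys self-containedness (no appeal to the $n$-cycle factorization literature) and a sharper bound of $2$ rather than $4$; the paper's approach buys brevity at the cost of a citation. One quibble: your closing remark that parity ``may force the count up to four'' is an unnecessary hedge --- the matching argument is parity-blind and already produces two derangements for an arbitrary $\sigma$, so no doubling is needed; also, your earlier suggestion to take $d$ a \emph{power} of the $n$-cycle $(1\,2\,\cdots\,n)$ does not work in general (the required nonzero residue avoiding all differences $\sigma(i)-i \bmod n$ need not exist), but you correctly fall back on the general matching argument, so this does not affect the proof.
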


\begin{proof}
The statement is vacuous for~$n=1$ and trivial for~$n=2$.
Let~$n\geq 4$.
Husemoller~\cite[Proposition 4]{husemoller1962ramified} attributes to Gleason
the result that every even permutation is a product of two $n$-cycles
(see~\cite{stanley1981factorization} for a more general result).
In particular, every even permutation is a product of two derangements.
Since every odd permutation is a product of an
even permutation and a transposition,
it remains to show that every transposition is a product of two derangements.

We will use the standard cycle notation of permutations~\cite{dixon1996permutation}.
Label~$n$ objects as~$a_1,\ldots,a_n$ in an arbitrary way.
Consider the transposition~$\tau=(a_{2} a_{4})$
and the $n$-cycle~$\sigma = (a_1 \cdots a_{n})$.
Then
\[
	\tau \circ \sigma = (a_{1} a_{4} \cdots a_{n}) \circ (a_{2} a_{3})
\]
is a derangement. Clearly~$\sigma^{-1}$ is also a derangement.
Therefore the transposition~$(a_{1} a_{3}) = \sigma^{-1} \circ \tau \circ \sigma$
is a product of two derangements. Since the labels~$a_1,\ldots,a_n$ are arbitrary,
this holds for any transposition (by conjugation by elements of~$\sym(n)$,
if one transposition is product of two derangement, then all transpositions are).
\end{proof}

Note that if~$f \in V_s$, only people among~$(N-M,N]$ can (and have to) move.
The next lemma shows that if~$f,g\in V_s$ agree on~$[1,N-M]$,
then there is a directed path from~$f$ to~$g$.
In the case~$M\neq 3$ the path lies in~$V_s$,
while the case~$M=3$ is exceptional and requires leaving~$V_s$ momentarily.

\begin{lemma} \label{lem:exchange}
Let~$f,g\in V_s$.
Suppose that the restrictions of~$f$ and~$g$ to~$[1,N-M]$ are equal.
Then~$g$ can be reached from~$f$ in at most~$4$ steps.
\end{lemma}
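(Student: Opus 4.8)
The plan is to reduce the statement to expressing a permutation of the rooms as a short product of derangements. Since $f,g\in V_s$, both restrictions $f|_{(N-M,N]}$ and $g|_{(N-M,N]}$ are bijections onto $[M]$, so there is a unique $\rho\in\sym(M)$ with $g(k)=\rho(f(k))$ for every $k\in(N-M,N]$. Because $f$ and $g$ agree on $[1,N-M]$ as well, transforming $f$ into $g$ amounts to realizing the room permutation $\rho$ on the $M$ smartest people while returning everyone in $[1,N-M]$ to their original room.

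The key observation is the following correspondence. If $h\in V_s$, then every room contains exactly one of the $M$ smartest people, who is therefore the smartest person in that room; hence a single step from $h$ moves precisely these $M$ people, one out of each room. If their chosen destinations form a derangement $\delta\in\sym(M)$ of the rooms, the resulting configuration again lies in $V_s$, the people in $[1,N-M]$ do not move, and the underlying room permutation is left-multiplied by $\delta$. Thus any factorization $\rho=\delta_t\circ\cdots\circ\delta_1$ into derangements with $t\le 4$ is realized by applying $\delta_1,\ldots,\delta_t$ in order, through $t$ consecutive steps that stay inside $V_s$ and never disturb $[1,N-M]$. For $M\neq 3$ this finishes the proof: by Lemma~\ref{lem:derangements} the permutation $\rho$ is a product of at most $4$ derangements of $[M]$.

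It remains to treat $M=3$, where the derangements of $[3]$ are the two $3$-cycles and generate only the even permutations. If $\rho$ is even it is a power of a $3$-cycle, realized in at most one step as above (and in zero steps when $\rho$ is the identity, i.e. $f=g$). If $\rho$ is a transposition, the construction above cannot remain inside $V_s$, since products of $3$-cycles are even, so we must leave $V_s$ momentarily. The idea is to perform a four-step excursion through configurations having one doubled room and one room empty of the three smartest people, and then re-spread, so that the net effect on the three smartest people is exactly $\rho$; one can write down such a sequence explicitly. The main obstacle I expect is the bookkeeping for $[1,N-M]$: while a room is temporarily empty of the three smartest people, its remaining smartest occupant is forced to move, so the excursion may displace some less smart people. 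The plan is to route the excursion so that each displaced person returns to its original room within the four-step budget, choosing the temporarily emptied room among those containing none of $[1,N-M]$ whenever possible, and otherwise carrying the single forced mover along and depositing it back. Confirming that four steps always suffice requires a short case analysis on the relative smartness of the three top people, since this determines who is forced out of a doubled room, and this bookkeeping is the part I anticipate to be most delicate.
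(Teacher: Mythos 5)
Your argument for $M\neq 3$ is complete and is precisely the paper's: from a configuration in $V_s$ every room's maximum is one of the $M$ smartest people, a step that stays in $V_s$ is exactly left-composition with a derangement of $[M]$, and Lemma~\ref{lem:derangements} factors $\rho$ into at most $4$ derangements. Your reduction of the $M=3$ case to realizing a single transposition (the identity and the $3$-cycles costing $0$ or $1$ steps) is also correct.

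For $M=3$, however, there is a genuine gap: you describe the strategy but never produce the sequence (``one can write down such a sequence explicitly''), and the part you defer --- the bookkeeping for $[1,N-3]$ while some room is empty of the three smartest people --- is exactly the crux of this case, not a routine verification. The paper resolves it with a concrete $3$-step path. After relabeling rooms so that person $N-3$ sits in room $1$, and after composing the target transposition with a $3$-cycle if necessary (one extra step, keeping the total at $4$) so that the transposition to be realized swaps room $1$ with room $2$, the path
\[
(\ldots,1,1,2,3)\to(\ldots,1,2,3,2)\to(\ldots,3,2,2,1)\to(\ldots,1,2,1,3)
\]
(coordinates of persons $N-3,N-2,N-1,N$) works: every intermediate configuration keeps at least one of the four smartest people in each room, so no one in $[1,N-4]$ ever becomes a room maximum and the tail $\ldots$ is genuinely unchanged, while person $N-3$ is the unique forced mover among $[1,N-3]$, chases the ``hole'' for two steps, and is deposited back in room $1$. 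Note that your fallback of ``choosing the emptied room among those containing none of $[1,N-M]$'' is unavailable in general (for large $N$ those people may occupy all three rooms), so this babysitter device is not optional; and the budget is tight, since the excursion must cost at most $3$ steps to leave room for the rotation that normalizes the position of the helper. Until the explicit path and this accounting are supplied (together with the separate, easy path for $N=M=3$), the lemma is not proved for $M=3$.
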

\begin{proof}
If~$M\neq 3$ this follows from Lemma~\ref{lem:derangements}:
since~$g\mid_{(N-M,N]}=\sigma \circ f_{(N-M,N]}$
for some permutation~$\sigma \in \sym([M])$, it can be reached in at most~$4$
derangements of the set~$(N-M,N]$.

Now let~$M=3$.
Since derangements of $3$~objects are $3$-cycles, it is enough to prove the
case in which~$g$ is obtained from~$f$ by a transposition. The case~$N=2$ is trivial.
Let~$N=M=3$.
Up to relabeling rooms, we can suppose~$f=(f(1),f(2),f(3))=(1,2,3)$ and~$g=(2,1,3)$.
The following is a directed path from~$f$ to~$g$:
\[
	(1,2,3), (2,3,2), (2,2,1), (2,1,3).
\]
Now let~$3=M<N$. Up to relabeling rooms and up to rotating~$(N-M,N]$,
we can suppose~$f=(\ldots,1,1,2,3)$ and~$g=(\ldots,1,2,1,3)$,
where~$\ldots$ is the same for both and will not be changed.
The following is a directed path from~$f$ to~$g$:
\[
	(\ldots,1,1,2,3), (\ldots,1,2,3,2), (\ldots,3,2,2,1), (\ldots,1,2,1,3).
\]
This concludes the proof.
\end{proof}

\begin{definition} \label{def:Vc}
Let~$V_c \subseteq V$ be the set of configurations~$f$
such that the restriction of~$f$ to~$(N-M,N]$ is constant.
\end{definition}

Intuitively, $V_c$~is the set of configurations in which the $M$~smartest people
occupy the same room:
if~$N\leq M$, this means that all people are in the same room.
While the set~$V_s$ plays the role of central hub in the graph~$G(N,M)$,
the vertices~$V_c$ are poorly connected (see also Figure~\ref{fig:1}):

\begin{lemma} \label{lem:impossible}
Let~$M\leq N$.
Then there is no edge~$(f,g)$ with~$g\in V_c$.
\end{lemma}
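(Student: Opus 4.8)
The plan is to argue by contradiction, exploiting the fact built into the edge relation that a person $k$ changes room during the step $f\to g$ precisely when $k$ is the smartest occupant of its room, i.e. $k=\max f^{-1}(f(k))$. So I would assume $(f,g)$ is an edge with $g\in V_c$ and derive an impossibility by a room count. Concretely, if $g\in V_c$ then the $M$ smartest people $N,N-1,\ldots,N-M+1$ all share one common room $r$ in $g$.

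The heart of the argument is a claim proved by downward induction on $j=0,1,\ldots,M-1$: person $N-j$ moves into room $r$ during the step, and its source room $f(N-j)$ is different from $r$. For the base case $j=0$, person $N$ is globally smartest, hence the smartest occupant of whatever room it sits in, hence it always moves; and if $f(N)=r$ it would move \emph{out} of $r$, contradicting $g(N)=r$, so $f(N)\neq r$ and $N$ moves into $r$. For the inductive step I would observe that if person $N-j$ instead stayed put, then $f(N-j)=g(N-j)=r$ and $N-j$ would not be the smartest occupant of $r$, forcing some strictly smarter person to sit in $r$ under $f$; but every person smarter than $N-j$ has index in $\{N-j+1,\ldots,N\}$, and the induction hypothesis says each of those already left $r$, a contradiction. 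Hence $N-j$ moves, and since $g(N-j)=r\neq f(N-j)$ it moves into $r$ from a room $f(N-j)\neq r$.

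With the claim in hand, the finish is a pigeonhole count. Each of the $M$ smartest people is, under $f$, the \emph{unique} smartest occupant of its source room, so the source rooms $f(N),f(N-1),\ldots,f(N-M+1)$ are pairwise distinct (two people cannot both be the smartest of one room). By the claim all of them avoid $r$, so they constitute $M$ distinct rooms lying in $[M]\setminus\{r\}$. But that set has only $M-1$ elements, giving the desired contradiction. This is exactly the point where the hypothesis $M\leq N$ is used, since it guarantees that the top group really does consist of $M$ distinct people.

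I expect the main obstacle to be the inductive step, specifically the bookkeeping that the smarter occupant forcing $N-j$ to stay must itself be one of the already-processed top-$M$ people; this rests on the simple but essential observation that anyone smarter than a member of the top group is again in the top group. Once that is isolated, the base case and the concluding count are routine.
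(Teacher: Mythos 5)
Your proof is correct and rests on the same pigeonhole argument as the paper's: the $M$ smartest people must all enter room $r$ from pairwise distinct source rooms lying in $[M]\setminus\{r\}$, which has only $M-1$ elements. The paper reaches the same count a bit more directly --- it notes that $f^{-1}(r)$ must be empty (else its maximum would leave $r$) and that every other room holds at most one person (else its minimum would stay put), which replaces your downward induction --- but the two arguments are essentially identical.
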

\begin{proof}
As in Lemma~\ref{lem:spread}, we may and will ignore~$[1,N-M]$,
and assume~$N=M$.

Let~$g(k)=r$ for all~$k\in [N]$ and
suppose that there is an edge~$(f,g)$.
The set~$f^{-1}(r)$ must be empty
(otherwise~$k=\max f^{-1}(r)$ would move to a different room, i.e.~$g(k)\neq r$).
Moreover, for every room~$s\neq r$ we must have~$\abs{f^{-1}(s)}\leq 1$
(otherwise~$k=\min f^{-1}(s)$ would remain in the same room, i.e.~~$g(k)=s\neq r$).
Since there are as many people as rooms, this contradicts the Pigeonhole Principle.
\end{proof}

The previous lemma shows that vertices in~$V_c$ have in-degree zero.
The following lemma implies that vertices in~$V\setminus V_c$
have positive in-degree, and can be reached from~$V_s$ in one step.

\begin{lemma} \label{lem:concentrate}
Let~$g\in V\setminus V_c$.
Then there is an edge~$(f,g)$ with~$f\in V_s$.
\end{lemma}
\begin{proof}
A concrete example illustrating the proof is given in Example~\ref{ex:concentrate}.
Let~$A\subseteq (N-M,N]$ be
the set of people among~$(N-M,N]$ that are the smartest in some room
according to~$g$. Note that~$\abs{A}=\abs{g(A)}\geq 2$,
and in particular there exists a derangement~$\sigma:A\to A$.
Let~$\iota$ be any bijection from~$(N-M,N]\setminus A$ to~$[M]\setminus g(A)$.
Let~$f \in V$ be defined as follows:
\[
	f(k) =
	\begin{dcases*}
	g(k) & if $k\in [1,N-M]$ \\
	g(\sigma(k)) & if $k\in A$ \\
	\iota(k) & if $k\in (N-M,N]\setminus A.$
	\end{dcases*}
\]
We claim that~$f\in V_s$.
Since~$\sigma$ is a derangement of~$A$, we have
\[
	f((N-M,N])=g(\sigma(A)) \cup \iota((N-M,N]\setminus A) = g(A) \cup ([M]\setminus g(A))
		= [M],
\]
and thus~$f\in V_s$.
We claim that~$(f,g)$ is an edge. In the configuration~$f$,
the $M$~smartest people occupy all $M$~rooms, thus people~$k\in [1,N-M]$
should not (and do not) move, i.e.~$f(k)=g(k)$.
It remains to verify that every person~$k\in (N-M,N]$, which should move,
actually does move, i.e.~$f(k)\neq g(k)$.
If~$k\in A$, we have~$f(k)=g(\sigma(k))\neq g(k)$ since the restriction~$g\vert_A$
is injective (by definition of~$A$) and~$\sigma$ is a derangement of~$A$.
If~$k\in (N-M,N]\setminus A$, we have~$f(k)=\iota(k)\in [M]\setminus g(A)$,
thus in particular~$f(k)\neq g(k)$.
\end{proof}

\begin{example} \label{ex:concentrate}
We illustrate the proof of Lemma~\ref{lem:concentrate} by a concrete example.
Consider~$N=7$ people, $M=4$ rooms~$R_1,R_2,R_3,R_4$, and the configurations
\begin{align*}
&
\begin{tabular}{ C{1cm} | C{2cm} | C{2cm} | C{2cm} | C{2cm} |}
$f:$ & 1 \ 2 \ \underline 4 & 3 \ \underline 7 & \underline 6 & \underline 5  \\
\end{tabular}
\\
&
\begin{tabular}{ C{1cm} | C{2cm} | C{2cm} | C{2cm} | C{2cm} |}
$g:$ & 1 \ 2  & 3\ \ \underline 4\ \ \underline 5\ \ \underline 6\ & \underline 7 &
\end{tabular}
\end{align*}
with~$g$ given and~$f$ constructed, by following the proof, as follows.
The $M$~smartest people are~$(N-M,N]=\{4,5,6,7\}$, highlighted in the display.
The configuration~$f$ is obtained by deranging~$\{6,7\}$ and injecting~$\{4,5\}$
into the remaining rooms. In the notation of
the proof, we have~$A=\{6,7\}$ and~$\sigma=(67)$.
We choose the function~$\iota:\{4,5\}\to \{R_1,R_4\}$ as~$\iota(4)=R_1$ and~$\iota(5)=R_4$.
From~$f$ to~$g$, the people moving are precisely
the smartest in each room, thus~$(f,g)$ is an edge.
\end{example}

\begin{proof} [Proof of Theorem~\ref{thm:1}]
First, suppose that~$M\geq N+1$. We will prove that~$G(N,M)$ is strongly connected.
Since~$M\geq N+1$, the set~$V_s$ is the subset of injective functions
and~$V_c$ the subset of constant functions.
Fix~$f,g\in V$.
By Lemma~\ref{lem:spread}, there is a directed path of length at most~$N$
from~$f$ to some~$f_1\in V_s$.
By Lemma~\ref{lem:concentrate} there is a directed path of length~$1$ 
from some~$g_1\in V_s$ to~$g$.
Since~$[1,N-M]=\emptyset$, the restrictions of~$f_1$ and~$g_1$ to~$[1,N-M]$ are equal
(they are both empty functions), thus by
Lemma~\ref{lem:exchange} there is a path of length at most~$4$ from~$f_1$ to~$g_1$.
We conclude that every~$g$ can be reached from every~$f$ in
at most~$N+5$ steps.

Conversely, suppose that~$G(N,M)$ is strongly connected.
Then there is an edge~$(f,g)$ with~$f\in V$ and~$g\in V_c$.
By Lemma~\ref{lem:impossible} this implies~$M\geq N+1$.
\end{proof}


\section{Proof of Theorem~\ref{thm:2}} \label{sec:proof_2}

In order to prove Theorem~\ref{thm:2}, it is convenient to first consider
an asynchronous variant of the game in which only one person moves at a time.

\begin{lemma} \label{lem:asynch}
Let~$N\geq 1$ and~$M\geq 2$.
Consider a variant of our game in which, at each step, a room is selected arbitrarily
and the smartest person of that room only moves to another room, while no one else moves.
Let~$G'(N,M)$ be the corresponding configuration graph.
Then the graph~$G'(N,M)$ is strongly connected.
In particular, any vertex can be reached from any other
in~$\O(N^2)$ steps.\footnote{\label{foot}
The proof of this lemma is an $\O(N^2)$-algorithm solving the asynchronous game.
For~$M=2$ rooms, this is optimal (for example,
going from~$f(k)=1$ for all~$k\in [N]$,
to~$g(k)\equiv k\, (\mathrm{mod}\, 2)$ for all~$k\in [N]$,
takes~$\Omega(N^2)$ steps).
For~$N\gg M\geq 3$, we conjecture that~$\O(N^2)$ can be improved.
Notice that Lemma~\ref{lem:asynch} is the only source of non-linearity in the paper:
any sub-quadratic solution to the asynchronous game will give,
through our proof of Theorem~\ref{thm:2}, a sub-quadratic solution to the synchronous game.
}
\end{lemma}
\begin{proof}
The statement is trivial for~$N=1$.
For~$N\geq 2$, begin by moving one at a time
all individuals from the room occupied by the person~$1$.
Once~$1$ is alone, move~$1$ to the intended room.
Once~$1$ is settled, it is never forced to move
and it does not affect the movement of~$[2,N]$.
The statement follows by induction.
\end{proof}

Now let us return to the original synchronous game.
Configurations~$f\in V_s$ do not allow elements of~$[1,N-M]$ to move.
In the next lemma, starting from~$f_1\in V_s$, we design a configuration~$g\notin V_s$
that creates just enough space
for a single element of~$[1,N-M]$ to move,
for then returning to~$f_2\in V_s$.
In this way, we are able to lift any move of the asynchronous game, played by~$[1,N-M]$,
to a path of two moves in the synchronous game, played by~$[1,N]$.

\begin{lemma} \label{lem:trick}
Let~$2\leq M\leq N$.
Let~$h_1,h_2: [1,N-M]\to [M]$. For every~$f_1\in V_s$ with~$f_1\mid_{[1,N-M]}=h_1$
there is~$f_2\in V_s$ with~$f_2\mid_{[1,N-M]}=h_2$
and a directed path in~$G(N,M)$ from~$f_1$ to~$f_2$ of length~$\O(N^2)$.
\end{lemma}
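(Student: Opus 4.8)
The plan is to realize the target statement by \emph{simulating}, inside $G(N,M)$, the asynchronous dynamics of $G'(N-M,M)$ on the bottom block $[1,N-M]$, using the top block $(N-M,N]$ purely as a controllable apparatus. By Lemma~\ref{lem:asynch} applied to the $N-M$ people of the bottom block, there is a directed path $h_1=\eta_0,\eta_1,\ldots,\eta_L=h_2$ in $G'(N-M,M)$ with $L=\O(N)$, where each step moves the smartest \emph{bottom} person of one chosen room to another room. I would prove that each such single $G'$-step can be reproduced by a gadget of \emph{constant} length in $G(N,M)$ that begins and ends in $V_s$ and changes the bottom block by exactly that one move while leaving all other bottom people fixed; concatenating the $L$ gadgets then yields the desired path from $f_1$ to some $f_2\in V_s$ of total length $\O(N)$.

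The crucial point---and the reason the statement is not immediate---is that in a configuration of $V_s$ the $M$ people of the top block occupy the $M$ rooms bijectively and are the unique smartest person in their rooms, so a single edge out of $V_s$ moves \emph{only} the top block and never a bottom person. Hence, to move a bottom person $j$ out of its room $r_1$ one must first vacate $r_1$ of its top person, exploiting that distinct top people are allowed to collide when they move. I would use a two-step gadget. In the first step, starting from $V_s$, route the top people so that room $r_1$ is emptied of tops while \emph{every} other room keeps a top: send the top of $r_1$ into the target room $r_2$, and apply a derangement of the remaining $M-1$ rooms to the remaining tops. The resulting configuration has $r_1$ occupied only by bottom people (with $j$ now the local smartest), room $r_2$ holding two tops, and each further room holding exactly one top. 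In the second step, the movers are exactly $j$ together with the smartest top of each still-topped room; I would send $j$ into $r_2$ (where the junior top remains to guard it) and re-spread the moving tops over the rooms $[M]\setminus\{r_2\}$ by a fixed-point-free bijection, landing back in $V_s$ with $j$ relocated to $r_2$.

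I would then verify correctness. Since after the first step every room other than $r_1$ still contains a top, the smartest occupant of each such room is a top, so in the second step no bottom person other than $j$ is ever a local maximum and thus none of them moves; and since tops are always smarter than bottoms, the identity of the smartest bottom person in any room is determined by the bottom block alone, making the simulation of the $G'$-step faithful regardless of how the tops happen to be arranged at the start of each gadget. Both endpoints lie in $V_s$ by construction, and the bottom block changes exactly as prescribed by $\eta_{i-1}\to\eta_i$.

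The main obstacle is the existence of the two routings, and this is precisely where the number of rooms enters. The step-one routing requires a derangement of the $M-1$ rooms other than $r_1$, which exists exactly when $M-1\geq 2$, i.e.\ $M\geq 3$; the step-two routing needs a fixed-point-free bijection between two $(M-1)$-element room sets differing in a single element, which exists for all $M\geq 2$. Thus the construction goes through for every $M\geq 3$, and I would treat the small case $M=3$ by exhibiting the two routings explicitly (as in Lemma~\ref{lem:exchange}, $M=3$ is where the available derangements are most constrained). The degenerate case $M=2$ is genuinely different---with only two rooms a bottom person can be permanently trapped---and is governed by the exceptional behaviour described after Theorem~\ref{thm:2}.
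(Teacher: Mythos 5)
Your proposal is correct and follows essentially the same route as the paper: reduce to a single asynchronous move of the bottom block via Lemma~\ref{lem:asynch}, then realize that move by a two-step gadget that first vacates room $r_1$ of its top person while deranging the remaining tops within $[M]\setminus\{r_1\}$, and then moves $j$ while the junior top guards the doubly occupied room and the other tops are redistributed fixed-point-freely onto $[M]\setminus\{r_2\}$. Your aside about $M=2$ is well taken: the statement as printed allows $2\le M\le N$, but for $2=M<N$ it is false (from any $f_1\in V_s$ the only possible move is the two tops swapping, so the bottom block is frozen forever), and the paper's own construction breaks there for the same reason you identify, namely that no derangement of the single room $[M]\setminus\{r\}$ exists; the hypothesis should read $3\le M\le N$, which is all Theorem~\ref{thm:2} needs. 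Your parenthetical worry about $M=3$ is unnecessary: a derangement of the $M-1=2$ remaining rooms exists, so your general construction already covers that case without a separate explicit routing.
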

\begin{proof}
By Lemma~\ref{lem:asynch} applied to~$G'(N-M,M)$, there a path in~$G'(N-M,M)$
from~$h_1$ to~$h_2$. We show that any edge in~$G'(N-M,M)$ can be lifted to a path
in~$G(N,M)$ that starts from a specified element of~$V_s$ and ends in~$V_s$.
Therefore, up to concatenating paths, it is enough to lift a single edge of~$G'(N-M,M)$.
In other words, we can assume~$(h_1,h_2)$ of the following form:
there is a unique~$j \in [1,N-M]$ such that~$h_1(j)\neq h_2(j)$,
and moreover~$j = \max h_1^{-1}(h_1(j))$.

Let~$f_1\in V_s$ be such that~$f_1\mid_{[1,N-M]}=h_1$. Let~$j$ be the person defined in
the previous paragraph.
We are going to define~$g\in V$ and~$f_2\in V_s$
such that~$(f_1,g)$ and~$(g,f_2)$ are edges of~$G(N,M)$,
and~$f_2\mid_{[1,N-M]}=h_2$.

A concrete example illustrating the proof is given in Example~\ref{ex:trick}.

Consider the room~$r=h_1(j)$.
Since~$f_1 \in V_s$,
in the configuration~$f_1$ the person~$j$ is precisely the second smartest
person in the room~$r$. The smartest person in~$r$ is some~$k\in (N-M,N]$.
Choose any derangement~$\rho$ of~$[M]\setminus \{r\}$.
Choose any~$s \in [M]\setminus\{r\}$. Define
\[
	g(p) =
	\begin{dcases*}
	f_1(p) & if $p\in [1,N-M]$ \\
	\rho(f_1(p)) & if $p\in (N-M,N]\setminus \{k\}$ \\
	s & if $p=k$.
	\end{dcases*}
\]
From~$f_1\in V_s$ to~$g$, the people moving are exactly those in~$(N-M,N]$,
thus~$(f_1,g)$ is an edge. Moreover, notice that in the configuration~$g$
the person~$j$ (now the smartest in the room~$r$) is the only person among~$[1,N-M]$
that can (and has to) move in the next step.
Furthermore,
notice that~$r$ is the only room containing no people among~$(N-M,N]$,
and that~$s$ is the only room containing more than one (exactly two)
people among~$(N-M,N]$. In particular~$n=\min g^{-1}(s) \cap (N-M,N]$
is the only person among~$(N-M,N]$
that cannot move next step.

Choose any derangement~$\sigma$ of~$[M]$ such that~$\sigma(r)=s$. Define
\[
	f_2(p) =
	\begin{dcases*}
	h_2(p) & if $p\in [1,N-M]$ \\
	\sigma(g(p)) & if $p\in (N-M,N]\setminus \{n\}$ \\
	s & if $p=n$.
	\end{dcases*}
\]
Note that~$f_2(j)=h_2(j)$, thus~$j$ has moved to the desired room,
while no other person among~$[1,N-M)$ has moved.
The person~$p=n$ has not moved:
we have~$f_2(n) = s = g(n)$. Every
other person~$p\in (N-M,N]\setminus \{n\}$ has moved. Therefore~$(g,f_2)$ is an edge.
Moreover
\[
	f_2((N-M,N])=\sigma(g((N-M,N])) \cup \{s\}
	= \sigma([M]\setminus \{r\}) \cup \{s\}
	= [M],
\]
and thus~$f_2\in V_s$.
\end{proof}

\begin{example} \label{ex:trick}
We illustrate the proof of Lemma~\ref{lem:trick} by a concrete example.
Consider~$N=7$ people and~$M=4$ rooms~$R_1,R_2,R_3,R_4$.
Let~$f_1 = |1 2 3 4 | 5 | 6 | 7 |$, thus~$h_1 = |1 2 3|\ |\ | \ |$.
Suppose that~$h_2 = |1 2|3 |\ | \ |$, that is,
we want to move~$j=3$ from~$R_1$ to~$R_2$. Consider the configurations
\begin{align*}
&
\begin{tabular}{ C{1cm} | C{2cm} | C{2cm} | C{2cm} | C{2cm} |}
$f_1:$ & 1 \ 2 \ 3 \ \underline 4 & \underline 5 & \underline 6 & \underline 7  \\
\end{tabular}
\\
&
\begin{tabular}{ C{1cm} | C{2cm} | C{2cm} | C{2cm} | C{2cm} |}
$g:$ & 1 \ 2 \ 3 & \underline 4\ \ \underline 6\ & \underline 7\ & \underline 5\ \\
\end{tabular}
\\
&
\begin{tabular}{ C{1cm} | C{2cm} | C{2cm} | C{2cm} | C{2cm} |}
$f_2:$ & 1 \ 2 \ \underline 5 & 3\ \ \underline 4 & \underline 6 & \underline 7
\end{tabular}
\end{align*}
where~$g$ and~$f_2$ are constructed as in the proof of Lemma~\ref{lem:trick}
as follows.
In the notation of the proof,
we have~$(N-M,N]=\{4,5,6,7\}$,
$j=3$, $k=4$, $r=R_1$, $s=R_2$, $\rho=(R_2 R_4 R_3)$,
$\sigma = (R_1 R_2 R_3 R_4)$, and~$n=4$. In the configuration~$g$ we create just enough space
for a single element~$j\in [1,N-M]$ to move (here~$j=3$).
\end{example}

\begin{proof} [Proof of Theorem~\ref{thm:2}]
Let~$3\leq M\leq N$. Lemma~\ref{lem:impossible} shows that every~$g\in V_c$ has
in-degree zero. Thus every~$g\in V_c$ forms strongly connected component of its own.
Clearly any~$g\in V_c$ (in fact, any~$g\in V$) has positive out-degree.
It remains to prove that~$V\setminus V_c$ is strongly connected.

Let~$f\in V$ and~$g\in V\setminus V_c$. We will
prove that there is a directed path from~$f$ to~$g$ of length at most~$\O(N^2)$.
By Lemma~\ref{lem:spread}, there is a directed path of length at most~$N$
from~$f$ to some~$f_1\in V_s$.
By Lemma~\ref{lem:concentrate}, there is a directed path of length~$1$
from some~$g_1\in V_s$ to~$g$.
In contrast to the proof of Theorem~\ref{thm:1}, we cannot apply Lemma~\ref{lem:exchange}, yet:
since~$M\leq N$ the restrictions~$h_1 = f_1\mid_{[1,N-M]}$ and~$h_2 = g_1\mid_{[1,N-M]}$
might differ.
However, by Lemma~\ref{lem:trick}, there is~$f_2\in V_s$ such that~$f_2\mid_{[1,N-M]} = h_2$
and a path from~$f_1$ to~$f_2$ of length~$\O(N^2)$.
By Lemma~\ref{lem:exchange}, which now can be applied, we obtain a path from~$f_2$ and~$g_1$
of length at most~$4$.
We conclude that every~$g\in V\setminus V_c$ can be reached from any~$f\in V\setminus V_c$
in~$\O(N^2)$ steps.
\end{proof}

\section{Acknowledgements}
I would like to thank the anonymous referee for their valuable suggestions,
and for pointing out that Lemma~\ref{lem:asynch} has quadratic complexity.
I would also like to thank my friend Eric Sandin Vidal
for pointing out the similarity to the Towers of Hanoi.
This paper was written while I was a PhD candidate at Vrije Universiteit
Amsterdam; I am thankful to the institution for its support.


\bibliographystyle{plain}
\bibliography{refs}

\begin{thebibliography}{1}

\bibitem{cox1989coalescing}
J~Theodore Cox.
\newblock {Coalescing Random Walks and Voter Model Consensus Times on the
  Torus} in {$\mathbb Z^d$}.
\newblock {\em The Annals of Probability}, pages 1333--1366, 1989.

\bibitem{cull1985towers}
Paul Cull and Earl~F Ecklund~Jr.
\newblock {Towers of Hanoi and Analysis of Algorithms}.
\newblock {\em The American Mathematical Monthly}, 92(6):407--420, 1985.

\bibitem{dixon1996permutation}
John~D Dixon and Brian Mortimer.
\newblock {\em {Permutation Groups}}, volume 163.
\newblock Springer Science \& Business Media, 1996.

\bibitem{husemoller1962ramified}
Dale~H Husemoller.
\newblock {Ramified coverings of Riemann Surfaces}.
\newblock {\em Duke Math Journal}, 29(1):167--174, 1962.

\bibitem{lovasz1995mixing}
L{\'a}szl{\'o} Lov{\'a}sz and Peter Winkler.
\newblock {Mixing of Random Walks and Other Diffusions on a Graph}.
\newblock {\em London Mathematical Society Lecture Note Series}, pages
  119--154, 1995.

\bibitem{romik2006shortest}
Dan Romik.
\newblock {Shortest Paths in the Tower of Hanoi Graph and Finite Automata}.
\newblock {\em SIAM Journal on Discrete Mathematics}, 20(3):610--622, 2006.

\bibitem{stanley1981factorization}
Richard~P Stanley.
\newblock {Factorization of Permutations into $n$-cycles}.
\newblock {\em Discrete Mathematics}, 37(2-3):255--262, 1981.

\bibitem{vsunic2012twin}
Zoran {\v{S}}uni{\'c}.
\newblock {Twin Towers of Hanoi}.
\newblock {\em European Journal of Combinatorics}, 33(7):1691--1707, 2012.

\bibitem{tao2006additive}
Terence Tao and Van~H Vu.
\newblock {\em {Additive Combinatorics}}, volume 105.
\newblock Cambridge University Press, 2006.

\end{thebibliography}

\end{document}